\documentclass[11pt,reqno]{amsart}

\usepackage{amssymb, latexsym, amsmath}
\usepackage{amsfonts}
\usepackage{amsthm}

\theoremstyle{definition}
\newtheorem{theorem}{Theorem}

\newtheorem{definition}[theorem]{Definition}
\newtheorem{corollary}[theorem]{Corollary}

\newtheorem*{example}{Example}

%
%
\newdimen\unit\newdimen\psep\newcount\nd\newcount\ndx\newbox\dotb\newbox\ptbox
\newdimen\dx\newdimen\dy\newdimen\dxx\newdimen\dyy\newdimen\hgt
\newdimen\dxl\newdimen\dyl\newdimen\xoff\newdimen\yoff
\newcommand\clap[1]{\hbox to 0pt{\hss{#1}\hss}}
\newcommand\vdisk[1]{{\font\dotf=cmr10 scaled #1\dotf.}}
\newcommand\varline[2]{\setbox\dotb\hbox{\vdisk{#1}}\xoff=-.5\wd\dotb
\wd\dotb=0pt\yoff=-.5\ht\dotb\psep=#2\ht\dotb}
\newcommand\varpt[1]{\setbox\ptbox\clap{\vdisk{#1}}\setbox\ptbox
\hbox{\raise-.5\ht\ptbox\box\ptbox}}
\newcommand\cpt{\copy\ptbox}
\newcommand\xpt{\raise-3pt\clap{x}}
\newcommand\point[3]{\rlap{\kern#1\unit\raise#2\unit\hbox{#3}}}
\newcommand\setnd[4]{\dx=#3\unit\advance\dx-#1\unit\divide\dx by\psep
\dy=#4\unit\advance\dy-#2\unit\divide\dy by\psep
\multiply\dx by\dx\multiply\dy by\dy\advance\dx\dy\nd=1\advance\dx-1sp
\loop\ifnum\dx>0\advance\dx-\nd sp\advance\nd1\advance\dx-\nd sp\repeat}
\newcommand\dl[4]{{\setnd{#1}{#2}{#3}{#4}\dline{#1}{#2}{#3}{#4}\nd}}
\newcommand\dline[5]{{\nd=#5\hgt=#2\unit\dx=#3\unit\advance\dx-#1\unit
\divide\dx by\nd\dy=#4\unit\advance\dy-#2\unit\divide\dy by\nd
\advance\hgt\yoff\rlap{\kern#1\unit\kern\xoff\loop\ifnum\nd>1\advance\nd-1
\advance\hgt\dy\kern\dx\raise\hgt\copy\dotb\repeat}}}

\newcommand\qellip[4]{{\setnd{0}{0}{#3}{#4}\dx=\unit\dy=0pt\raise\yoff\rlap{%
\kern#1\unit\kern\xoff\raise#2\unit\hbox{\loop\ifnum\dx>0\rlap{\kern#3\dx
\raise#4\dy\copy\dotb}\hgt=\dx\divide\hgt by\nd\advance\dy\hgt\hgt=\dy
\divide\hgt by\nd\advance\dx-\hgt\repeat\rlap{\raise#4\dy\copy\dotb}}}}}
\newcommand\xellip[6]{{\setnd{0}{0}{#3}{#4}\dx=\unit\dy=0pt
\dxl=#5\unit\dyl=#6\unit\raise\yoff\rlap{%
\kern#1\unit\kern\xoff\raise#2\unit\hbox{\loop\ifnum\dx>\dxl
\ifnum\dy>\dyl\rlap{\kern#3\dx\raise#4\dy\copy\dotb}\fi
\hgt=\dx\divide\hgt by\nd\advance\dy\hgt\hgt=\dy
\divide\hgt by\nd\advance\dx-\hgt\repeat}}}}

\newcommand\bez[6]{{\setnd{#1}{#2}{#3}{#4}\ndx=\nd\setnd{#3}{#4}{#5}{#6}
\ifnum\ndx>\nd\nd=\ndx\fi\dx=#3\unit\advance\dx-#1\unit\dy=#4\unit
\advance\dy-#2\unit\dxx=#5\unit\advance\dxx-#1\unit\dyy=#6\unit\advance
\dyy-#2\unit\advance\dxx-2\dx\advance\dyy-2\dy\divide\dxx by\nd\divide\dyy
by\nd\advance\dx.25\dxx\advance\dy.25\dyy\divide\dx by\nd\divide\dy by\nd
\multiply\nd by2\dx=100\dx\dy=100\dy\dxx=100\dxx\dyy=100\dyy\divide\dxx by\nd
\divide\dyy by\nd\hgt=#2\unit\raise\yoff\rlap{\kern#1\unit\kern\xoff
\raise\hgt\copy\dotb\loop\ifnum\nd>0\advance\nd-1\advance\hgt0.01\dy
\kern0.01\dx\raise\hgt\copy\dotb\advance\dx\dxx\advance\dy\dyy\repeat}}}
\newcommand\ptu[3]{\point{#1}{#2}{\cpt\raise1ex\clap{$\scriptstyle{#3}$}}}
\newcommand\ptd[3]{\point{#1}{#2}{\cpt\raise-1.8ex\clap{$\scriptstyle{#3}$}}}
\newcommand\ptr[3]{\point{#1}{#2}{\cpt\raise-.4ex\rlap{$\ \scriptstyle{#3}$}}}
\newcommand\ptl[3]{\point{#1}{#2}{\cpt\raise-.4ex\llap{$\scriptstyle{#3}\ $}}}
\newcommand\pxu[3]{\point{#1}{#2}{\xpt\raise1ex\clap{$\scriptstyle{#3}$}}}
\newcommand\pxd[3]{\point{#1}{#2}{\xpt\raise-1.8ex\clap{$\scriptstyle{#3}$}}}
\newcommand\pxr[3]{\point{#1}{#2}{\xpt\raise-.4ex\rlap{$\ \scriptstyle{#3}$}}}
\newcommand\pxl[3]{\point{#1}{#2}{\xpt\raise-.4ex\llap{$\scriptstyle{#3}\ $}}}
\newcommand\ptlu[3]{\point{#1}{#2}{\raise.8ex\clap{$\scriptstyle{#3}$}}}
\newcommand\ptld[3]{\point{#1}{#2}{\raise-1.6ex\clap{$\scriptstyle{#3}$}}}
\newcommand\ptlr[3]{\point{#1}{#2}{\raise-.4ex\rlap{$\,\scriptstyle{#3}$}}}
\newcommand\ptll[3]{\point{#1}{#2}{\raise-.4ex\llap{$\scriptstyle{#3}\,$}}}

\newcommand\thnline{\varline{400}{.4}}
\newcommand\dotline{\varline{800}{2.5}}
\varpt{2500}\thnline\unit=2em
%

\newcommand\fan[2]{\point{#1}{#2}{$\thnline\dl00{.5}{.2}\dl00{.5}{-.2}$}}
\newcommand\naf[2]{\point{#1}{#2}{$\thnline\dl00{-.5}{.2}\dl00{-.5}{-.2}$}}

\title{Limited packings of closed neighbourhoods in graphs}

\author[P. N. Balister]{Paul N. Balister}
\address{Department of Mathematical Sciences, University of Memphis, Memphis TN 38152, USA}
\email{pbalistr@memphis.edu}

\author[B. Bollob\'{a}s]{B\'{e}la Bollob\'{a}s}
\address{Department of Pure Mathematics and Mathematical Statistics, University of Cambridge, Wilberforce Road, Cambridge CB3\thinspace0WB, UK; {\em and\/}
Department of Mathematical Sciences, University of Memphis, Memphis TN 38152, USA; {\em and\/} London Institute for Mathematical Sciences, 35a South St., Mayfair, London W1K\thinspace2XF, UK.}
\email{b.bollobas@dpmms.cam.ac.uk}

\author[K. Gunderson]{Karen Gunderson}
\address{Heilbronn Institute for Mathematical Research, School of Mathematics, University of Bristol, Bristol BS8 1TW, UK.}
\email{karen.gunderson@bristol.ac.uk}

\date{14 Dec 2014}

\subjclass[2010]{Primary 05C70}

\begin{document}

\begin{abstract}
The k-limited packing number, $L_k(G)$, of a graph $G$, introduced by Gallant, Gunther, Hartnell, and Rall, is the maximum cardinality of a set $X$ of vertices of $G$ such that every vertex of $G$ has at most $k$ elements of $X$ in its closed neighbourhood. The main aim in this paper is to prove the best-possible result that if $G$ is a cubic graph, then $L_2(G) \geq |V (G)|/3$, improving the previous lower bound given by Gallant, \emph{et al.}

In addition, we construct an infinite family of graphs to show that lower bounds given by Gagarin and Zverovich are asymptotically best-possible, up to a constant factor, when $k$ is fixed and $\Delta(G)$ tends to infinity. For $\Delta(G)$ tending to infinity and $k$ tending to infinity sufficiently quickly, we give an asymptotically best-possible lower bound for $L_k(G)$, improving previous bounds.
\end{abstract}

\maketitle

\section{Introduction}\label{sec:intro}

Limited packings in graphs were introduced by Gallant, Gunther, Hartnell and Rall \cite{GGHR10} as a generalization of certain types of neighbourhood packings.  For a graph $G$ and vertex $v$, let $N[v] = \{v\} \cup N(v)$ denoted the \emph{closed neighbourhood of $v$}.  For $k \geq 1$ and a graph $G$, a set $X \subseteq V(G)$ is called a \emph{$k$-limited packing} if for every $v \in V(G)$, $|N[v] \cap X| \leq k$.  

In the case $k = 1$, a $1$-limited packing is precisely a set of vertices where every pair is at distance at least $3$. Meir and Moon \cite{MM75} defined a distance $k$-packing in a graph to be a set of vertices $X$ with the property that for every $x,y \in X$, $d(x,y) > k$. Thus, a distance $2$-packing is the same as a $1$- limited packing. For $k > 1$, there is no direct connection between $k$-limited packings and distance packings.

The notion of a $k$-limited packing can be rephrased in terms of subsets of hypergraphs.  Given a graph $G$, define a hypergraph with vertices $V(G)$ and whose hyperedges are the closed neighbourhoods of vertices in $G$.  A $k$-limited packing in $G$ corresponds to a subset of the hypergraph with maximum vertex degree $k$.

The central question examined here is the order of the largest $k$-limited packing in a graph.

\begin{definition}
For a graph $G$ and $k \geq 1$, the \emph{$k$-limited packing number of $G$} is
\[
L_k(G) = \max\{|X| : X \subseteq V(G) \text{ is a $k$-limited packing}\}.
\]
\end{definition} 

Note that if $k > \Delta(G)$, then $L_k(G) = |V(G)|$ and for a fixed graph $G$, the function $L_k(G)$ is non-decreasing in $k$.

For $2$-regular graphs, $k$-limited packing numbers can be determined exactly. For any $n \geq 3$, the cycle $C_n$ has $1$-limited packing number $L_1(C_n) = \lfloor n/3\rfloor$ and $2$-limited packing number $L_2(C_n) = \lfloor 2n/3 \rfloor$, as was noted in \cite{GGHR10}. Considering the fraction of vertices in a limited packing, one can see that for any $2$-regular graph $G$, $L_1(G) \geq n/5$, which is achieved by graphs whose connected components are all copies of $C_5$. Further, if $G$ is a $2$-regular graph, then $L_2(G) \geq n/2$, which is achieved by graphs whose connected components are copies of $C_4$.

For any graph $G$ with $\Delta(G) = \Delta$, the set of vertices at distance at most $2$ from any particular vertex of $G$ has at most $\Delta^2 + 1$ vertices. Thus, by a greedy choice of a $1$-limited set, we have that
\[
L_1(G) \geq \frac{ |V(G)|}{\Delta^2 + 1}.
\]
In particular, for any cubic graph $G$, the $1$-limited packing number is at least $L_1 (G) \geq |V (G)|/10$ . This lower bound is achieved by the Petersen graph which has $10$ vertices and diameter $2$. By taking graphs consisting of many vertex-disjoint copies of the Petersen graph, this shows that for every $n$ divisible by $10$, there is a graph on $n$ vertices with $L_1(G) = |V (G)|/10$.

Gallant, Gunther, Hartnell and Rall \cite{GGHR10} determined $L_k(G)$ precisely for certain classes of graphs and claim that for cubic graphs,
\begin{equation}\label{eq:cubic-bd}
\frac{1}{4}|V (G)| \leq  L_2(G) \leq \frac{1}{2}|V (G)|.
\end{equation}
In addition, they relate limited packing numbers to certain domination parameters in graphs. For a graph $G$ and $\ell \geq 1$, a set $D\subseteq V(G)$ is called an \emph{$\ell$-tuple dominating set} if for every $v \in V(G)$, $|N[v]\cap D| \geq \ell$. For $\ell=1$, a $1$-tuple dominating set is a dominating set in the usual sense. If $G$ is $r$-regular, then a set $D$ is an $\ell$-tuple dominating set if{f} $V (G) \setminus D$ is a $(r+1-\ell)$-limited packing. Thus, bounds on the limited packing numbers of regular graphs can be interpreted as bounds on multiple domination.

In \cite{GZ13}, Gagarin and Zverovich use a random approach to show that for a graph $G$ with $\Delta(G) = \Delta$ and $k \leq \Delta(G)$,
\begin{equation}\label{eq:random-del-bd}
L_k(G) \geq n \frac{k}{(k+1) \sqrt[k]{\binom{\Delta}{k}(\Delta+1)}}.
\end{equation}

In Section \ref{sec:cubic}, we show that if $G$ is graph with $\Delta(G) \leq 3$, then $L_2(G) \geq |V (G)|/3$, with an example showing this is best-possible. This improves the previous best-known lower bound of $L_2(G) \geq |V (G)|/4$, as in equation \eqref{eq:cubic-bd}. In the case $k=2$ and $\Delta=3$, the bound given by equation \eqref{eq:random-del-bd} is $L_2(G) \geq \frac{|V(G)|}{3\sqrt{3}}$ which is a worse bound than $L_2(G) \geq |V (G)|/4$. Our new lower bound on $L_2(G)$ for cubic graphs is also translated into a new and best-possible upper-bound for the size of $2$-tuple dominating sets in cubic graphs.

%
%

In Section \ref{sec:large-deg}, we give examples of graphs constructed using finite projective planes to show that for $k$ fixed and $\Delta$ tending to infinity, the lower bound from \eqref{eq:random-del-bd} is tight up to constants.  Using the Lov\'{a}sz Local Lemma, we give an improved lower bound on $L_k(G)$ for $k$ and $\Delta(G)$ tending to infinity, when $k$ grows sufficiently quickly.

\section{Cubic graphs}\label{sec:cubic}

In this section, we give tight bounds on the fraction of vertices in any $1$-limited or $2$-limited packing in any graph with maximum degree $3$.

As was noted in the introduction, a greedy choice of a $1$-limited packing shows that if $G$ is a graph with $\Delta(G) = 3$, then $L_1(G) \geq |V(G)|/10$, a lower bound which is obtained by taking graphs consisting of a union of vertex-disjoint copies of the Petersen graph.

In the case $k = 3$, the $3$-limited packing number of a $3$-regular graph $G$ is precisely dual to the usual domination number, $\gamma(G)$. As was noted in the previous section, if $G$ is a $3$-regular graph, then $L_3(G) + \gamma(G) = n$. Kostochka and Stocker \cite{KS09} showed that if $G$ is a connected cubic graph, then $\gamma(G) \leq 5|V (G)|/14$. Further, they gave examples where this is sharp, including a connected cubic graph on $14$ vertices with domination number $5$. Thus, an immediate consequence of this result is that if $G$ is a cubic graph, then $L_3(G) \geq 9|V (G)|/14$, which is tight.

Consider now $2$-limited packings in cubic graphs. In Theorem \ref{thm:cubic-bd} below, we show that every graph with maximum degree $3$ has a $2$-limited packing containing at least a third of the vertices. To see that this bound is tight, let $H_6$ be the graph on $6$ vertices consisting of a $6$-cycle with all three chords of length $3$ added.  Any $2$-limited packing of $H_6$ contains at most $2$ vertices of $H_6$. By taking multiple vertex-disjoint copies of $H_6$, one has an infinite collection of cubic graphs $G$ with $L_2(G) = |V (G)|/3$.

The proof of Theorem \ref{thm:cubic-bd} uses induction on the number of vertices, with two types of edges to keep track of additional conditions imposed on limited packings of subgraphs.

Throughout the proof of Theorem \ref{thm:cubic-bd} to come, let $G$ be a multigraph of maximum degree 3 with edges of two possible types: {\em colour edges} (c-edges)
and {\em domination edges} (d-edges).
In such a multigraph, define a set $X\subseteq V(G)$ to be a {\em$2$-limited\/} set if for any c-edge $uv$,
$|X\cap\{u,v\}|\le 1$ and for any vertex $v$, $|X\cap N_d[v]|\le 2$, where $N_d[v]=\{v\}\cup\{u:uv$
is a d-edge$\}$ is the closed d-neighbourhood of~$v$. Note that the c-edges do not contribute to
$N_d[v]$ and $X$ may contains pairs of vertices joined by d-edges. Note also that both types
of edge contribute to the degree of a vertex, and this degree must be at most~3.

Duplicate c-edges or duplicate d-edges between vertices $u$ and $v$ may be removed without changing
the conditions on $X$, however there may be both a c-edge and a d-edge joining the same
pair of vertices, so $G$ is a multigraph.

\begin{theorem}\label{thm:cubic-bd}
 If $G$ does not contain a component $K_4$ consisting entirely of c-edges,
 then there is a $2$-limited set $X$ with $|X|\ge|V(G)|/3$.
\end{theorem}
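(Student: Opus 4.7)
The plan is to proceed by induction on $|V(G)|$, taking $G$ to be a minimum counterexample. A reduction consists of identifying a subgraph $H\subseteq G$, choosing a partial $2$-limited set $X_0\subseteq V(H)$ with $|X_0|\ge |V(H)|/3$, and passing to a smaller multigraph $G'$ obtained by deleting $V(H)$ and, where necessary, recording the residual constraint felt by a boundary vertex via a new c-edge or d-edge. For each reduction one must check that (a)~$G'$ still has maximum degree~$3$ and no all-c-edge $K_4$ component, so the induction hypothesis applies, and (b)~$X=X_0\cup X'$ is $2$-limited in $G$ for any $2$-limited set $X'$ of $G'$, i.e.\ that the re-encoding really does faithfully represent the remaining restriction from $X_0$.

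The first round of reductions removes easy local obstructions. A vertex of degree $0$ or $1$ is added to $X$ and deleted, possibly converting one of its neighbour's remaining incident edges to record that a d-slot has been used. Any c-only component of order at most $3$ is handled by inspection, and any c-only component of order at least $4$ other than $K_4$ is $3$-colourable by Brooks' theorem, hence contributes an independent set of size at least one third of its order. After these reductions one may assume that $G$ is $3$-regular, that every vertex is incident to at least one d-edge, and that the c-edges form only small local configurations.

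The core of the argument is the second round, which treats mixed local configurations: two vertices joined by both a c-edge and a d-edge, a triangle of d-edges (two of whose vertices can simultaneously enter $X$), a $K_4$ spanned by the closed d-neighbourhood of a vertex, a vertex whose three neighbours are mutually d-adjacent, and short alternating cycles of c- and d-edges. In each case the gadget admits a partial solution of size at least one third of its vertex count, and the constraint on its boundary can be re-encoded by inserting a small number of c- or d-edges into $G'$. The delicate step is to make sure the new c-edges never close off an all-c-edge $K_4$ component; when such a closure threatens, the gadget is enlarged to absorb the offending vertices before the reduction is applied.

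The main obstacle will be the terminal case, after all these local reductions have been ruled out. At that point $G$ is $3$-regular, each vertex carries both a c-edge and a d-edge in a tightly constrained pattern, and the only surviving configurations should be variants of the extremal graph $H_6$ described in the introduction. For each such configuration one must exhibit a $2$-limited set of size at least $|V(G)|/3$ directly. Since $H_6$ already attains equality, this is where the constant $\tfrac13$ is truly pinned down; any slack in handling these rigid configurations would immediately break the induction, so the bulk of the care in the proof is expected to go into enumerating them and verifying each by hand.
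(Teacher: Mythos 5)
Your overall framework --- induction on $|V(G)|$ with local reductions that delete a gadget, place roughly a third of its vertices into $X$, and re-encode the residual constraint on the boundary by new c-edges --- matches the paper's strategy, including the first round (low-degree vertices, Brooks' theorem for all-c-edge components) and the concern about accidentally creating an all-c-edge $K_4$. However, there is a genuine gap in your endgame. You expect that once the local reductions are exhausted, only finitely many rigid ``variants of $H_6$'' survive, to be enumerated and verified by hand. That is not the case, and no such enumeration is available: after your listed reductions (d-triangles, d-edge $K_4$'s, vertices with mutually d-adjacent neighbours, short alternating cycles), $G$ could still be, for example, an arbitrarily large triangle-free cubic graph in which every edge is a d-edge. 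Nothing in your plan handles this, so the induction does not close.

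The missing idea is that the generic case is itself handled by a uniform reduction organised around a single d-edge $uv$, split according to how many triangles $uv$ lies in. In the paper, when $uv$ lies in no triangle one deletes the six vertices $\{u,v,a,b,c,d\}$ (where $a,b$ are the other neighbours of $u$ and $c,d$ those of $v$), puts $\{u,v\}$ into $X$, and records the constraint that each of $a,b,c,d$ already has one packed d-neighbour by adding a c-edge between the two surviving neighbours of each (so up to four new c-edges $a'a''$, $b'b''$, $c'c''$, $d'd''$). The one- and two-triangle cases are similar but smaller. The genuinely delicate part --- which your plan only gestures at --- is that several of these new c-edges may together complete an all-c-edge $K_4$; the paper excludes the single-edge case in advance via a designated forbidden configuration and then disposes of the multi-edge coincidences by explicit alternative packings (e.g.\ taking $\{a',a'',b\}$ in place of $\{u,v\}$), terminating in a $10$-vertex graph when all four added edges conspire. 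Without the triangle-based case split on a d-edge, your reduction list is not exhaustive and the proof does not go through.
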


\begin{proof}
We use induction on the number of vertices. Clearly we may assume $G$ is
connected as otherwise we can apply induction to each component and take the
union of the corresponding $2$-limited sets. If $G$ has three or fewer vertices then
we can take $X$ to be any single vertex. If $G$ has 4 vertices and is not a $K_4$
with all c-edges, then pick $uv$ which is not a c-edge (possibly not an edge at all)
and let $X=\{u,v\}$. If $|V(G)|>4$ and $G$ consists only of c-edges then we
can 3-colour $G$ by Brooks' theorem. At least one colour class has at least $|V(G)|/3$
vertices and we can use this colour class for~$X$. Hence we may assume $G$ contains
at least one d-edge.

In the following we shall modify $G$ be removing vertices and occasionally
adding c-edges between vertices. The main problem is that we must avoid generating
a $K_4$ component using just c-edges. The following reduction is therefore useful.

Suppose $G$ contains the following configuration
\[
 \ptd00{c}\ptu02{a}\pxd20{d}\pxu22{b}\ptd31{u}\ptd51{v}\fan51
 \dotline\dl0002\dl0020\dl0022\dl0220\dl0222
 \thnline\dl2031\dl2231\dl3151
 \point{0}{-1}{Configuration $A$.}\hskip5\unit
\]
where dotted lines indicate c-edges and solid lines are either c-edges or d-edges
(or both when the degree condition allows it).
Pick a largest $2$-limited set $X$ for $G\setminus\{a,b,c,d,u,v\}$. Then $X\cup\{b,d\}$
is $2$-limited for $G$. As we have removed 6 vertices from $G$, $|X|\ge |V(G)|/3-2$
and so we have the required $2$-limited set for $G$. Hence we may assume $G$
does not contain configuration~$A$.

Now suppose $G$ contains a vertex $u$ adjacent to only one other vertex $v$
(possibly by both c- and d-edges). Consider the following transformation.
\[
 \pxd01{u}\ptd21{v}\ptd30{b}\ptu32{a}\fan30\fan32
 \thnline\dl0121\dl2130\dl2132\hskip3\unit
 \qquad\raise1\unit\hbox{$\Rightarrow$}\qquad
 \ptd00{b}\ptu02{a}\fan00\fan02\dotline\dl0002\hskip1\unit
\]
where we remove $\{u,v\}$ from $G$ and add the c-edge $ab$. The $2$-limited set is $X\cup\{u\}$,
where $X$ is a maximum $2$-limited set for the resulting graph. Note that we can assume
that adding the c-edge $ab$ will not complete a $K_4$ in c-edges by the absence
of configuration $A$. Note also that if $v$ is adjacent to fewer than 3 vertices,
or if $ab$ is already a c-edge,
or indeed, if any of $vu$, $va$, or $vb$ is not a d-edge then the same construction works
without the need to add $ab$ as a c-edge.

Now suppose that $G$ contains a vertex $u$ adjacent to only two other vertices, $v$ and $w$.
Consider the following transformation.
\[
 \naf00\naf02\ptd00{b}\ptu02{a}\ptd11{v}\pxd21{u}\ptd31{w}\ptd40{d}\ptu42{c}\fan40\fan42
 \thnline\dl0011\dl0211\dl1131\dl3140\dl3142\hskip4\unit
 \qquad\raise1\unit\hbox{$\Rightarrow$}\qquad
 \naf00\naf02\ptd00{b}\ptu02{a}\ptd30{d}\ptu32{c}\fan30\fan32
 \dotline\dl0002\dl3032\hskip3\unit
\]
Then $X\cup\{u\}$ is $2$-limited where $X$ is a $2$-limited set in $G\setminus\{v,u,w\}$
with the c-edges $ab$ and $cd$ added. As before there are a number of degenerate cases
where in fact either or both $ab$ and $cd$ do not need to be added. (One particular degenerate
case is when $vw$ is an edge so that $a=w$ and $c=v$ are also removed.) There is however
one special subcase that needs to be dealt with in a different way. Due to the absence
of configuration~$A$ we may assume that neither $ab$
or $cd$ generates a $K_4$ in c-edges {\em individually}.
However it is possible that $ab$ and $cd$ together form two edges of a single $K_4$ in c-edges.
However, in this case $G$ consists of just 7 vertices and we can take $\{a,b,w\}$
as our $2$-limited set. (Of course $ab$ is not an edge of $G$, $w$ is incident only
to d-edges as otherwise we would not have needed to add $cd$,
and $\{c,d\}\ne\{a,b\}$ so $w$ has at most one neighbour in $\{a,b\}$.)

Note that we can now assume $G$ contains no multiple edges (as otherwise some vertex
would be joined to at most two other vertices). We may also assume $G$ is 3-regular.
As mentioned above we may also assume it has at least one d-edge.

Suppose there exists a d-edge $uv$ that lies in two triangles. Consider the transformation
which removes all the following vertices.
\[
 \naf01\ptd01{a}\ptd21{b}\pxd30{u}\pxu32{v}\ptd41{c}\ptd61{d}\fan61
 \thnline\dl0121\dl2130\dl2132\dl3041\dl3241\dl4161\dl3032\hskip6\unit
\]
The $2$-limited set is $X\cup\{u,v\}$ where $X$ is $2$-limited in $G\setminus\{a,b,c,d,u,v\}$.
This applies even if $a,b,c,d$ are not all distinct.

Now suppose there exists a d-edge $uv$ that lies in a single triangle. Consider the transformation
\[
 \naf03\naf02\naf01\naf00\ptd03{a'}\ptd02{a''}\ptd01{b'}\ptd00{b''}
 \thnline\dl0011\dl0111\dl0212\dl0312\ptu12{a}\ptd11{b}
 \dl1222\dl1121\pxu22{u}\pxd21{v}\dl2122\dl213{1.5}\dl223{1.5}\ptd{3}{1.5}{w}
 \dl3{1.5}4{1.5}\fan4{1.5}\ptd{4}{1.5}{c}
 \hskip4\unit\qquad\raise1.5\unit\hbox{$\Rightarrow$}\qquad
 \naf03\naf02\naf01\naf00\ptr03{a'}\ptr02{a''}\ptr01{b'}\ptr00{b''}
 \dotline\dl0001\dl0203
\]
We consider the set $X\cup\{u,v\}$ where $X$ is $2$-limited in $G\setminus\{a,b,c,u,v,w\}$
with the c-edges $a'a''$, $b'b''$ added if necessary. Once again, a number of degenerate cases
are covered here, although we know that $a\ne b$ as $uv$ is not in two triangles.
However, we do need to consider the subcase where the addition
of both $a'a''$ and $b'b''$ are required and together give rise to a $K_4$ in the c-edges.
Note that in this case all edges incident to $a$ or $b$ are d-edges.
We consider $X\cup\{a',a'',b\}$ where $X$ is $2$-limited in $G$ with the
$K_4$ and $\{a,b,u,v,w\}$ removed. Note that this is $2$-limited: $a'a''$ is not an edge
of $G$ and $\{b',b''\}\ne\{a',a''\}$ so $b$ is adjacent to at most one of $a',a''$.
 
Finally, consider a d-edge $uv$ that does not lie in a triangle. Consider the
following transformation.
\[
 \naf03\naf02\naf01\naf00\ptd03{a'}\ptd02{a''}\ptd01{b'}\ptd00{b''}
 \thnline\dl0011\dl0111\dl0212\dl0312\ptu12{a}\ptd11{b}
 \dl122{1.5}\dl112{1.5}\pxd2{1.5}{u}\dl2{1.5}3{1.5}\pxd3{1.5}{v}
 \dl423{1.5}\dl413{1.5}\ptu42{c}\ptd41{d}\dl4150\dl4151\dl4252\dl4253
 \ptd53{c'}\ptd52{c''}\ptd51{d'}\ptd50{d''}\fan50\fan51\fan52\fan53
 \hskip5\unit\qquad\raise1.5\unit\hbox{$\Rightarrow$}\qquad
 \naf03\naf02\naf01\naf00\ptr03{a'}\ptr02{a''}\ptr01{b'}\ptr00{b''}
 \ptl23{c'}\ptl22{c''}\ptl21{d'}\ptl20{d''}\fan20\fan21\fan22\fan23
 \dotline\dl0001\dl0203\dl2021\dl2223
 \hskip2\unit
\]
We consider $X\cup\{u,v\}$ where $X$ is $2$-limited in $G\setminus\{a,b,c,d,u,v\}$
with c-edges $a'a''$, $b'b''$, $c'c''$, and $d'd''$ added if necessary.
Note that $a,b,c,d$ are distinct as $uv$ does not lie in a triangle.
Also if $a'a''$, say, needs to be added then all edges incident to $a$ are
d-edges. As there must be some d-edge somewhere, we are done except in
the cases when one or more c-edge $K_4$s are formed by the addition of $\{a'a'',b'b'',c'c'',d'd''\}$.

No single added c-edge can form a c-edge $K_4$ due to the absence of configuration~$A$.
Suppose first that adding just the c-edges $a'a''$ and $b'b''$ forms a c-edge $K_4$.
We then consider $X\cup\{a',a'',b\}$ where $X$ is $2$-limited for $G$ with the $K_4$
and $\{a,b,u,v\}$ removed. If adding $a'a''$ and $c'c''$ forms a c-edge $K_4$,
consider $X\cup\{a',a'',c\}$ where $X$ is $2$-limited for $G$ with the $K_4$
and $\{a,c,u,v\}$ removed. Now suppose we need three added c-edges to form a
c-edge $K_4$, say $a'a''$, $b'b''$, $c'c''$. Then we take $X\cup\{a',a'',b,v\}$
where $X$ is $2$-limited in $G$ with the $K_4$ and $\{a,b,c,d,u,v\}$ removed and with
the c-edge $d'd''$ added if necessary. Finally, if all four added c-edges
are needed to form a c-edge $K_4$ then $G$ contains just 10 vertices
and we can take $\{a,b,c,d\}$ as our $2$-limited set. 
\end{proof}

For an arbitrary graph $G$ of maximum degree $3$, let all edges be $d$-edges and then since there are no $c$-edges, Theorem \ref{thm:cubic-bd} applies directly.

\begin{corollary}
For any graph $G$ with $\Delta(G) = 3$,
\[
L_2(G) \geq \frac{|V(G)|}{3}.
\]
\end{corollary}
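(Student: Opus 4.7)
The plan is to obtain this as a direct application of Theorem \ref{thm:cubic-bd}, which is stated for multigraphs equipped with the c-edge/d-edge structure. Since the theorem is strictly more general than the corollary, the only work is to translate the original graph $G$ into the multigraph setting in a way that matches the two definitions of a $2$-limited set.

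Concretely, given an arbitrary graph $G$ with $\Delta(G)=3$, I would view $G$ as a multigraph by declaring every edge of $G$ to be a d-edge and introducing no c-edges. In this multigraph, $N_d[v]$ coincides with the ordinary closed neighbourhood $N[v]$ for every vertex $v$, so the d-edge condition $|X\cap N_d[v]|\le 2$ is exactly the condition that $X$ is a $2$-limited packing in $G$ in the sense of the paper's definition. The c-edge condition is vacuous because there are no c-edges. Thus a $2$-limited set in the multigraph sense is the same thing as a $2$-limited packing in $G$.

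Next I would check that the hypothesis of Theorem \ref{thm:cubic-bd} holds trivially: since we introduced no c-edges, $G$ cannot contain a component that is a $K_4$ consisting entirely of c-edges. Theorem \ref{thm:cubic-bd} therefore produces a $2$-limited set $X\subseteq V(G)$ with $|X|\ge |V(G)|/3$, and by the correspondence above this $X$ is a genuine $2$-limited packing in $G$, giving $L_2(G)\ge |V(G)|/3$.

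There is really no obstacle to overcome here; all of the difficulty has already been absorbed into Theorem \ref{thm:cubic-bd}, whose more elaborate multigraph formulation exists precisely so that the inductive reductions can track the extra constraints (c-edges for forbidden pairs, d-edges for the packing count) that arise when vertices are contracted away. The corollary simply records the ``plain'' case in which none of that bookkeeping is needed.
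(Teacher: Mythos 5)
Your proposal is correct and is exactly the paper's argument: the paper deduces the corollary by declaring all edges of $G$ to be d-edges, noting that with no c-edges the $K_4$ hypothesis is vacuous and the multigraph notion of a $2$-limited set coincides with a $2$-limited packing. Your write-up just spells out the routine verification the paper leaves implicit.
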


Recall that if $G$ is an $r$-regular graph and $X$ is a $k$-limited packing in $G$, then $V (G) \setminus X$ is a $(r + 1 - k)$-tuple dominating set. The size of the smallest $\ell$-tuple dominating set in a graph $G$ is denoted by $\gamma_{\times \ell}(G)$. As was noted by Gallant, Gunther, Hartnell and Rall \cite{GGHR10}, if $G$ is an $r$-regular graph and $k \geq 1$, then
\[
L_k(G) + \gamma_{\times (r- k+1)}(G) = |V (G)|.
\]
Harant and Henning \cite{HH05} showed that for any graph $G$, with $n$ vertices, minimum degree $\delta$ and average degree $d$, then $\gamma_{\times 2} (G) \leq \frac{(\ln(1+d) + \ln \delta + 1)n}{\delta}$.  This bound was improved by Cockayne and Thomason \cite{CT08} who showed that $\gamma_{\times 2}(G) \leq  \frac{(\ln(1+\delta) + \ln \delta + 1)n}{\delta}$.  Neither of these bounds are of use when $\delta = d = 3$.  Theorem \ref{thm:cubic-bd} shows that if $G$ is a $3$-regular graph on $n$ vertices, then
\[
\gamma_{\times 2}(G) \leq n - L_2(G) \leq \frac{2n}{3}. 
\]

\section{Graphs with large degree}\label{sec:large-deg}

When $k \geq 2$ is fixed and $\Delta$ tends to infinity, the lower bound for $L_k(G)$ given by Gagarin and Zverovich \cite{GZ13}, as in equation \eqref{eq:random-del-bd}, shows that if $G$ if a graph with $\Delta(G) = \Delta$, then
\begin{equation}\label{eq:k-fixed-bd}
L_k(G) > n \frac{k}{k+1}\left(\frac{1}{(k+1)\binom{\Delta+1}{k+1}}\right)^{1/k} \geq \frac{nk}{e \Delta^{1+1/k}}.
\end{equation}

The following example shows that up to the constant factor $e^{-1}$, this is best possible.

\begin{example}
For any $k \geq 1$ and $q = p^n$, a prime power, define a graph $G_{q,k}$ whose vertex set is the points of a $k + 1$ dimensional projective space over $GF(q)$. That is, vertices are equivalence classes of non-zero elements of $GF(q)^{k+2}$, where a pair of vectors in $GF(q)^{k+2}$ are equivalent if one is a non-zero multiple of the other.  Join two vertices by an edge if their inner product is $0$ (in $GF(q)$).

In this graph, any $k$ vertices trivially form a $k$-limited packing.  On the other hand, any collection of $k+1$ points lie on some $k$-dimensional hyperplane.  The normal vector to this hyperplane is adjacent to all points on the hyperplane and hence to all $k+1$ points.  Thus, the largest $k$-limited packing in this graph is of size $k$.

The number of vertices in $G_{q,k}$ is $\frac{q^{k+2}-1}{q-1}$ and every vertex has $\frac{q^{k+1}-1}{q-1}$ neighbours.  Thus, as $q$ tends to infinity,
\[
L_k(G_{q,k}) = k = \frac{k |V(G_{q,k})|}{\Delta(G_{q,k})^{1+1/k}} (1+o(1)).
\]
\end{example}

In some cases where both $k$ and $\Delta$ are tending to infinity, a better bound than that given by equation \eqref{eq:k-fixed-bd} can be obtained using the Lov\'{a}sz Local Lemma.  The `symmetric' version of the Lov\'{a}sz Local Lemma \cite{EL75}, which can also be found in \cite{bB01}, gives the following result about events in a probability space without too much dependence.  Let $B_1, B_2, \ldots, B_n$ be events in a probability space and let $d \geq 1$ be such that for every $i \leq n$, there is a set $I_i$ with $|I_i| \leq d$ so that $B_i$ is mutually independent of $\{B_j \mid j \notin I_i \cup \{i\}\}$.  If for each $i \leq n$, we have $\mathbb{P}(B_i) \leq \frac{1}{e(d+1)}$, then
\[
\mathbb{P}\left(\cap_{i=1}^n \bar{B}_i \right) \geq \left(1 - \frac{1}{d} \right)^n.
\]

\begin{theorem}\label{thm:lll-bd}
Let $k > \log \Delta \log\log \Delta$.  For any graph $G$ with $\Delta(G) \leq \Delta$,
\[
L_k(G) \geq \frac{k |V(G)|}{\Delta}(1+o(1)),
\]
as $\Delta \to \infty$.
\end{theorem}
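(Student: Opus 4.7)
The plan is to use a random construction with inclusion probability $p=(1-\epsilon)k/(\Delta+1)$ for a slowly vanishing $\epsilon=\epsilon(\Delta)$, combining an upper-tail Chernoff bound plus the symmetric LLL (to rule out local overload) with a lower-tail Chernoff bound (to force the total to be large). Take $\epsilon=\sqrt{7\log\Delta/k}$; the hypothesis $k>\log\Delta\log\log\Delta$ forces $k/\log\Delta\to\infty$, so $\epsilon\to 0$, while $\epsilon^2 k\ge 7\log\Delta$ remains comfortably large.

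Include each vertex of $G$ independently in a random set $X_0$ with probability $p$. For each $v\in V(G)$, let $B_v$ denote the event $|X_0\cap N[v]|>k$. Since $\mathbb{E}|X_0\cap N[v]|\le(1-\epsilon)k$ and $|N[v]|\le\Delta+1$, the standard upper-tail Chernoff bound yields
\[
\mathbb{P}(B_v)\le\exp(-\epsilon^2 k/3)\le\frac{1}{e(\Delta^2+1)}
\]
for all $\Delta$ sufficiently large. The event $B_v$ depends only on the coin flips at vertices of $N[v]$, and $N[v]\cap N[u]=\emptyset$ whenever the graph distance satisfies $d(u,v)>2$; so $B_v$ is mutually independent of all $B_u$ with $u$ lying outside a set of size at most $1+\Delta+\Delta(\Delta-1)=\Delta^2+1$. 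The symmetric Lov\'asz Local Lemma quoted above then gives
\[
\mathbb{P}\Bigl(\bigcap_{v}\bar B_v\Bigr)\ge\Bigl(1-\frac{1}{\Delta^2+1}\Bigr)^{|V(G)|}\ge e^{-|V(G)|/\Delta^2}.
\]

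To control the size as well, let $D$ be the event $|X_0|<(1-\epsilon)p|V(G)|$. A lower-tail Chernoff bound gives $\mathbb{P}(D)\le\exp(-\epsilon^2 p|V(G)|/2)$. Since $p\ge k/(2(\Delta+1))$, we have $\epsilon^2 p\ge\epsilon^2 k/(4\Delta)$, and from $\epsilon^2 k\ge 7\log\Delta$ this is much larger than $2/\Delta^2$. Consequently $\mathbb{P}(D)$ is exponentially smaller than $\mathbb{P}(\bigcap_v\bar B_v)$, so $\mathbb{P}(\bar D\cap\bigcap_v\bar B_v)>0$. Any realisation selected from this event is a $k$-limited packing of size at least $(1-\epsilon)p|V(G)|=(1-\epsilon)^2 k|V(G)|/(\Delta+1)=(1+o(1))k|V(G)|/\Delta$, giving the claimed bound.

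The main obstacle is the balancing act in the choice of $\epsilon$: to make $\mathbb{P}(B_v)$ small enough to feed into the LLL one needs $\epsilon^2 k\gtrsim\log\Delta$, while the conclusion requires $\epsilon=o(1)$. These two demands are simultaneously realisable exactly when $k=\omega(\log\Delta)$, which is precisely what the hypothesis $k>\log\Delta\log\log\Delta$ secures (with room to spare); everything else is routine asymptotic bookkeeping to collect the various $(1+o(1))$ factors.
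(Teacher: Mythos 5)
Your proposal is correct and follows essentially the same route as the paper: include each vertex independently with probability $p=(1-\epsilon)\frac{k}{\Delta+1}$, bound $\mathbb{P}(B_v)$ by an upper-tail Chernoff estimate, apply the symmetric Local Lemma over the $\le\Delta^2$ dependent events at distance at most $2$, and beat that probability with a lower-tail Chernoff bound on $|X_0|$. The only cosmetic difference is your choice $\epsilon=\sqrt{7\log\Delta/k}$ in place of the paper's $\epsilon_1=\sqrt{5/\log\log\Delta}$; both vanish under the hypothesis $k>\log\Delta\log\log\Delta$ and make $\epsilon^2k\gtrsim\log\Delta$, so the argument goes through identically.
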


\begin{proof}
Fix a graph $G$ on $n$ vertices with $\Delta(G) = \Delta$ and let $k \geq \log \Delta \log\log \Delta$.  Set $\varepsilon_1 = (5/\log\log\Delta)^{1/2}$ and $p = (1-\varepsilon_1) (k+1)/(\Delta+1)$.  Choose a set of vertices, $X$, in $G$ independently at random, each vertex chosen with probability $p$.  For each $v \in V(G)$, let $B_v$ be the event that $|N[v] \cap X| \geq k+1$.  Then, the event $\cap \bar{B}_v$ is the event that $X$ is a $k$-limited packing.

Using the Chernoff bound for binomial random variables,
\begin{align*}
\mathbb{P}(B_v)
	& = \mathbb{P}\left(\operatorname{Bin}(\deg(v) + 1, p) \geq k+1\right)\\
	& \leq \mathbb{P} \left(\operatorname{Bin}(\Delta+1, p) \geq k+1\right)\\
	& \leq \exp\left(- \frac{(k+1 - p(\Delta+1))^2}{2p(\Delta+1)+ \frac{2}{3}(k+1-p(\Delta+1))} \right)\\
	& = \exp\left(- \frac{\varepsilon_1^2 (k+1)}{2 - 4\varepsilon_1/3} \right)\\
	&\leq \exp\left( - \frac{\varepsilon_1^2(k+1)}{2}\right)\\
	& = \exp\left(- \frac{5(\log\Delta\log\log\Delta + 1)}{2 \log\log\Delta}\right) < \exp(-1-2\log(\Delta+1)).
\end{align*}

If vertices $v$ and $w$ are such that $d(v, w) \geq 3$, then $B_v$ is independent of $B_w$.  Thus, each event $B_v$ is mutually independent of all but at most $\Delta + \Delta(\Delta-1) = \Delta^2$ events of the form $B_w$.  Since for every $v \in V(G)$,
\[
\mathbb{P}(B_v) < \exp(-1-2\log(\Delta+1))  = \frac{1}{e(\Delta+1)^2} \leq \frac{1}{e(\Delta^2  +1)},
\]
then by the Local Lemma, the probability that $X$ is a $k$-limited packing is at least, when $\Delta$ is large enough,
\begin{equation}\label{eq:lll-k-lim}
\mathbb{P}(\cap \bar{B}_v) \geq \left(1 - \frac{1}{\Delta^2}\right)^n > \exp\left(\frac{-2n}{\Delta^2} \right).
\end{equation}

On the other hand, consider the probability that the set $X$ is much smaller than its expected size.  Set $\varepsilon_2 = 3/\sqrt{k\Delta}$.  Then, again by the Chernoff bound, and using the fact that $p = (1-\varepsilon_1)(k+1)/(\Delta+1) > k/(2\Delta)$,
\begin{align*}
\mathbb{P}(|X| < (1-\varepsilon_2) np)
	& \leq \exp\left(- \frac{(np - (1-\varepsilon_2)np)^2}{2np} \right)\\
	&  = \exp\left(-\frac{\varepsilon_2^2 np}{2}\right)\\
	& \leq \exp\left(- \frac{\varepsilon_2^2 n k}{4 \Delta} \right)\\
	& = \exp\left(- \frac{9 n}{4 \Delta^2} \right)\\
	& < \exp\left(-\frac{2n}{\Delta^2}\right).
\end{align*}

Thus, $\mathbb{P}(|X| < (1-\varepsilon_2)np) < \mathbb{P}(\cap \bar{B}_v)$ and so there is at least one choice of a set $X$ so that $X$ is a $k$-limited packing and $|X| \geq (1-\varepsilon_2) np$.  Using the fact that $k \geq \log \Delta \log\log \Delta$ and hence $\varepsilon_2 <  \varepsilon_1/3$, then
\[
L_k(G) \geq \frac{(k+1) n}{(\Delta+1)} \left(1 - \frac{3}{\sqrt{\log\log\Delta}} \right) = \frac{k n}{\Delta}(1+o(1)).
\]
which completes the proof.
\end{proof}

Note that for any graph $G$ with minimum degree $\delta(G)$, by double counting,
\[
L_k(G) \leq \frac{k |V(G)|}{\delta(G) +1}
\]
and, in particular, for $\Delta$-regular graphs, Theorem \ref{thm:lll-bd} is asymptotically best possible, for all values of $k$ for which the result applies.

\end{document}